\theoremstyle{plain}
\newtheorem{theorem}{Theorem}[section]
\newtheorem{lemma}[theorem]{Lemma}
\newtheorem{proposition}[theorem]{Proposition}
\newtheorem{corollary}[theorem]{Corollary}
\theoremstyle{definition}
\newtheorem{definition}[theorem]{Definition}
\theoremstyle{remark}
\def\G{\Gamma}
\begin{document}


\title[The Large Rank of a Finite Semigroup]{The Large Rank of a Finite Semigroup\\ Using Prime Subsets}
\author[Jitender Kumar, K. V. Krishna]{Jitender Kumar and  K. V. Krishna}
\address{Department of Mathematics, Indian Institute of Technology Guwahati, Guwahati, India}
\email{\{jitender, kvk\}@iitg.ac.in}

\subjclass[]{20M10}

\keywords{Large rank, Brandt semigroups, Transformation semigroups}

\maketitle

\begin{center}
Communicated by Mikhail Volkov
\end{center}


\begin{abstract}
The \emph{large rank} of a finite semigroup $\G$, denoted by $r_5(\G)$, is the least number $n$ such that every subset of $\G$ with $n$ elements generates $\G$. Howie and Ribeiro showed that $r_5(\G) = |V| + 1$, where $V$ is a largest proper subsemigroup of $\G$. This work considers the complementary concept of subsemigroups, called \emph{prime subsets}, and gives an alternative approach to find the large rank of a finite semigroup. In this connection, the paper provides a shorter proof of Howie and Ribeiro's result about the large rank of Brandt semigroups. Further, this work obtains the large rank of the semigroup of order-preserving singular selfmaps.
\end{abstract}


\section{Introduction}

The concept of rank for general algebras is analogous to the concept of dimension in linear algebra. The dimension of a vector space is the maximum cardinality of an independent subset, or equivalently, it is the minimum cardinality of a generating set of the vector space.
A subset $U$ of a semigroup $\G$ is said to be \emph{independent} if every element of $U$ is not in the subsemigroup generated by the remaining elements of $U$, i.e., \[ \forall a \in U, \; a \notin \langle U \setminus \{a\} \rangle .\] This definition of independence is analogous to the usual definition of independence in linear algebra. Howie and Ribeiro observed that the minimum size of a generating set need not be equal to the maximum size of an  independent set in a semigroup. Accordingly, they have considered the following possible definitions of ranks for a finite semigroup $\G$ (cf. \cite{a.hw99,a.hw00}).
\begin{enumerate}
\item $r_1(\G) = \max\{k :$ every subset $U$  of cardinality $k$ in $\G$ is independent\}.
\item $r_2(\G) = \min\{|U| : U \subseteq \G, \langle U\rangle = \G\}$.
\item $r_3(\G) = \max\{|U| : U \subseteq \G, \langle U\rangle = \G, U$ is independent\}.
\item $r_4(\G) = \max\{|U| : U \subseteq \G, U$  is independent\}.
\item $r_5(\G) = \min\{k : $ every subset $U$ of cardinality $k$ in  $\G$  generates $\G$\}.
\end{enumerate}
For a finite semigroup $\G$, it can be observed that \[r_1(\G) \le r_2(\G) \le r_3(\G) \le r_4(\G) \le r_5(\G).\] Thus,
$r_1(\G), r_2(\G), r_3(\G), r_4(\G)$ and $r_5(\G)$ are, respectively, known as \emph{small rank}, \emph{lower rank}, \emph{intermediate rank}, \emph{upper rank} and \emph{large rank} of $\G$.

In this work,  we focus on determining the large rank of a finite semigroup. The large rank can be obtained by using the following key result of Howie and Ribeiro.
\begin{theorem}[\cite{a.hw00}]\label{r5-lsgp}
Let $\G$ be a finite semigroup and let $V$ be a  proper subsemigroup of $\G$ with the largest possible size. Then $r_5(\G) = |V| + 1.$
\end{theorem}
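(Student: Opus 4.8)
The plan is to establish the two inequalities $r_5(\G) \ge |V|+1$ and $r_5(\G) \le |V|+1$ separately, which together force the claimed equality. I would phrase both directions directly in terms of the definition of $r_5$ as the least $k$ for which every $k$-element subset generates $\G$, so that the lower bound amounts to exhibiting one bad $|V|$-subset and the upper bound to ruling out all bad $(|V|+1)$-subsets.

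For the lower bound $r_5(\G) \ge |V|+1$, I would simply exhibit a subset of cardinality $|V|$ that fails to generate $\G$. Since $V$ is a proper subsemigroup, $\langle V \rangle = V \ne \G$, so $V$ itself is such a witness: it is a $|V|$-element subset that does not generate $\G$. Hence it is false that every subset of cardinality $|V|$ generates $\G$, so by the defining minimum $k$ cannot be taken as small as $|V|$; that is, $r_5(\G) \ge |V|+1$.

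For the upper bound $r_5(\G) \le |V|+1$, I would show that every subset $U$ with $|U| = |V|+1$ generates $\G$, arguing by contradiction. Suppose $U$ is such a set with $\langle U \rangle \ne \G$. The key observation is the containment $U \subseteq \langle U \rangle$, so the proper subsemigroup $\langle U \rangle$ has at least $|U| = |V|+1$ elements. This contradicts the choice of $V$ as a proper subsemigroup of largest possible size. Therefore $\langle U \rangle = \G$ for every such $U$, and hence $r_5(\G) \le |V|+1$.

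I do not expect a genuine obstacle: the argument is elementary once one notes that $\langle U \rangle$ always contains $U$. The only points needing a little care are bookkeeping ones, namely confirming that the set over which $r_5$ is minimized is nonempty (it is, since $\G$ is finite and the full set $\G$ generates itself, so $k = |\G|$ always works) and handling the minimum correctly, so that producing a single non-generating $|V|$-subset genuinely rules out $k = |V|$ rather than merely some $|V|$-subset.
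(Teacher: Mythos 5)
Your proof is correct in substance. Note first that the paper itself offers no proof of this theorem: it is quoted from Howie and Ribeiro \cite{a.hw00}, and everything downstream (Proposition 2.2, Corollary 2.3) is built on top of it. So there is no in-paper argument to compare against; your two-inequality proof is the natural, standard one. Your upper bound is airtight: a non-generating $U$ with $|U| = |V| + 1$ satisfies $U \subseteq \langle U \rangle \subsetneq \G$, so $\langle U \rangle$ is a proper subsemigroup of size at least $|V| + 1$, contradicting the maximality of $V$. One small patch is needed in the lower bound: exhibiting $V$ as a non-generating subset of size $|V|$ rules out $k = |V|$ as a good value, but the claimed equality also requires that no $k < |V|$ be good, and that does not follow formally from excluding $k = |V|$ alone. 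It follows immediately from either of two one-line observations: (a) monotonicity --- if every $k$-subset generated $\G$, then every $(k+1)$-subset would too, since it contains a generating $k$-subset, so the set of good $k$ is upward closed and excluding $k = |V|$ excludes everything below it; or (b) directly, for any $k \le |V|$, a $k$-element subset of $V$ generates a subsemigroup contained in $V$ (as $V$ is closed under products), hence fails to generate $\G$. Your closing remark about ``handling the minimum correctly'' gestures at this issue, but as written it only addresses $k = |V|$; with either observation inserted, the proof is complete.
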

Consequently, if a finite semigroup $\G$ has an indecomposable element, then $r_5(\G) = |\G|$. Thus, by identifying indecomposable elements, authors determined large ranks of certain semigroups which include free nilpotent semigroups, free semilattices, finite monogenic semigroups which are not groups (cf. \cite{a.hw00,a.mini09,t.jdm02}). Howie and Ribeiro obtained large ranks of the full transformation semigroup and an arbitrary Brandt semigroup by investigating largest proper subsemigroups of these semigroups \cite{a.hw00}.

In this paper, we propose an alternative approach to ascertain the large rank of a finite semigroup using the notion called prime sets. In the proposed approach, we provide an elegant and shorter proof for the large rank of Brandt semigroups. Further, in this paper, we also obtain the large rank for the semigroup of order-preserving singular selfmaps.

\section{Prime subsets vis-\`a-vis large rank}

In this section, we introduce the concept of prime subsets of a semigroup using which we propose a technique to find the large rank of a finite semigroup.

\begin{definition}
A nonempty subset $U$ of a (multiplicative) semigroup $\G$ is said to be \emph{prime} if, for all $a, b \in \G$, \[ab \in U \; \; \text{implies}\; \; a \in U \mbox{ or } b \in U.\]
\end{definition}

The following proposition is useful in the sequel.

\begin{proposition}\label{gm-lsgp}
Let $V$ be a proper subset of a finite semigroup $\G$. Then $V$ is a smallest prime subset of $\G$ if and only if $\G \setminus V$ is a largest subsemigroup of $\G$.
\end{proposition}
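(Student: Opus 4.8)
The plan is to prove the biconditional by establishing that the complement operation $V \mapsto \G \setminus V$ sets up a correspondence between prime subsets and subsemigroups, and that this correspondence is inclusion-reversing, so that smallest on one side matches largest on the other. First I would verify the core equivalence at the level of a single pair: for a proper subset $V \subsetneq \G$, the set $V$ is prime if and only if $W := \G \setminus V$ is a subsemigroup. Writing out the prime condition, $V$ is prime means that for all $a,b \in \G$, $ab \in V$ implies $a \in V$ or $b \in V$. Taking the contrapositive, this says: if $a \notin V$ and $b \notin V$, then $ab \notin V$; that is, $a,b \in W$ implies $ab \in W$, which is exactly closure of $W$ under the product. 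Thus $V$ is prime precisely when $W$ is a subsemigroup.

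Next I would handle the size/extremality translation. Since $|V| + |\G \setminus V| = |\G|$ is constant, the complement map is strictly inclusion- and size-reversing: $V$ has smallest cardinality among proper prime subsets exactly when $\G \setminus V$ has largest cardinality among the corresponding sets. Combining this with the equivalence from the first step, a smallest prime subset $V$ corresponds to a largest subsemigroup $\G \setminus V$, and conversely. I would also note that $V$ being a \emph{proper} subset corresponds to $\G \setminus V$ being nonempty, and that primeness requires $V$ nonempty as in the definition, so $\G \setminus V$ is a proper subset of $\G$; these boundary conditions ensure we are genuinely comparing proper subsemigroups with proper prime subsets and that the extremal objects on both sides exist by finiteness of $\G$.

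The only subtlety I anticipate, and the step I would treat most carefully, is confirming that the nonemptiness and properness conventions match up cleanly at both ends so that the two extremal notions refer to the same family of complementary pairs. Concretely, a smallest prime subset must be a proper subset of $\G$ (otherwise the whole semigroup $\G$ is trivially prime and one must argue it is not the smallest), and a largest subsemigroup must be proper; I would check that the complement of a smallest prime subset cannot be empty and the complement of a largest proper subsemigroup is automatically a prime subset that is proper. Because $\G$ is finite, the collections of proper prime subsets and of proper subsemigroups are both nonempty and finite, so minima and maxima are attained, and the inclusion-reversing bijection forces the extremal elements to correspond. Assembling these pieces yields both directions of the biconditional.
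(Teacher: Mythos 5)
Your proposal is correct and follows essentially the same route as the paper: the contrapositive of the prime condition gives exactly the closure of $\G \setminus V$ under multiplication, and the constant sum $|V| + |\G \setminus V| = |\G|$ makes complementation size-reversing, so the extremal notions correspond. Your extra attention to the nonemptiness and properness conventions merely makes explicit what the paper leaves implicit, and does not change the argument.
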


\begin{proof}
Note that $\G \setminus V$ is not a subsemigroup of $\G$  if and only if  there exist $a, b \in \G \setminus V$ such that $ab \not\in \G \setminus V$ if and only if  there exist $a, b \not\in V$ such that  $ab \in V$ if and only if  $V$ is not a prime subset.
Now, for any $X, Y \subset \G$, since $|X| + |\G \setminus X| = |Y| + |\G \setminus Y|$, we have $|X| < |Y|$ if and only if $|\G \setminus Y| < |\G \setminus X|$. Hence, we have the result.
\end{proof}

In view of Theorem \ref{r5-lsgp}, we have the following corollary of Proposition \ref{gm-lsgp}.

\begin{corollary}\label{c.tech}
Let $V$ be a smallest proper prime subset of a finite semigroup $\G$. Then $r_5(\G) = |\G \setminus V| + 1.$
\end{corollary}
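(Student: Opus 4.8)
The plan is to deduce the conclusion directly from the two results now in hand, namely Proposition~\ref{gm-lsgp} together with Theorem~\ref{r5-lsgp}. Writing $W = \G \setminus V$, the strategy is to exhibit $W$ as a proper subsemigroup of $\G$ of largest possible size, and then to feed $W$ into Theorem~\ref{r5-lsgp} so as to obtain $r_5(\G) = |W| + 1 = |\G \setminus V| + 1$. No fresh computation should be required; the work lies entirely in verifying that the hypotheses of Theorem~\ref{r5-lsgp} are met.

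First I would invoke the correspondence supplied by Proposition~\ref{gm-lsgp}. Since $V$ is assumed to be a \emph{smallest} proper prime subset, the equivalence established there forces its complement $W = \G \setminus V$ to be a subsemigroup of $\G$ whose cardinality is \emph{largest} possible; this is exactly the size-complementation step recorded in the proof of that proposition. Next I would check that $W$ qualifies as input to Theorem~\ref{r5-lsgp}, that is, that it is a genuine proper subsemigroup. Because every prime subset is nonempty by definition, $V \neq \vn$, whence $W = \G \setminus V \neq \G$ and $W$ is proper; and because $V$ is a proper subset, $W$ is nonempty, so $W$ is an honest subsemigroup. Applying Theorem~\ref{r5-lsgp} to this largest proper subsemigroup $W$ then gives $r_5(\G) = |W| + 1 = |\G \setminus V| + 1$, as claimed.

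The only point demanding any care, and the closest thing to an obstacle, is the reconciliation of terminology: Proposition~\ref{gm-lsgp} speaks of a ``largest subsemigroup'' whereas Theorem~\ref{r5-lsgp} requires a ``largest \emph{proper} subsemigroup.'' The two notions coincide here precisely because the complement of a nonempty proper prime subset is automatically both proper and nonempty, so maximizing $|\G \setminus V|$ over proper prime subsets $V$ ranges exactly over the proper subsemigroups. Once this identification is made explicit, the corollary follows with no further argument.
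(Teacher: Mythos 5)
Your proof is correct and is precisely the argument the paper intends: the corollary is stated there as an immediate consequence of combining Proposition~\ref{gm-lsgp} with Theorem~\ref{r5-lsgp}, exactly the route you take with $W = \G \setminus V$. Your explicit verification that $W$ is both nonempty and proper (so that the ``largest subsemigroup'' of the proposition matches the ``largest proper subsemigroup'' demanded by the theorem) merely spells out a step the paper leaves tacit.
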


Thus, the problem of finding the large rank of a finite semigroup is now reduced to the problem of finding a smallest proper prime subset of the semigroup.

\section{Brandt semigroups}

In this section, we give much shorter proof of Howie and Ribeiro's result about the large rank of Brandt semigroups (cf. \cite{a.hw00}).

Given a finite group $G$ and a natural number $n$, write $[n] = \{1,2, \ldots, n\}$ and $B(G, n) = ([n] \times G \times [n]) \cup \{0\}$.  Define a binary operation (say, multiplication) on $B(G, n)$ by
\[ (i, a, j)(k, b, l) =
                \left\{\begin{array}{cl}
                (i, ab, l) & \text {if $j = k$;}  \\
                0     & \text {otherwise,}
                \end{array}\right. \]
\[\mbox{and }\; 0(i, a, j)  = (i, a, j)0 = 00 = 0.\]
With the above defined multiplication, $B(G, n)$ is a semigroup known as a \emph{Brandt semigroup}. When $G$ is the trivial group, the Brandt semigroup is denoted by $B_n$.

\begin{theorem}\label{lr-bs}
For $n \ge 2$, $r_5(B(G, n)) = (n^2 - n + 1)|G| + 2.$
\end{theorem}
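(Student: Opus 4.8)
The plan is to use Corollary~\ref{c.tech}: I need only exhibit a smallest proper prime subset $V$ of $B(G,n)$, and then $r_5(B(G,n)) = |B(G,n) \setminus V| + 1$. Since $|B(G,n)| = n^2|G| + 1$, matching the claimed formula $r_5 = (n^2 - n + 1)|G| + 2$ forces $|B(G,n) \setminus V| = (n^2 - n + 1)|G| + 1$, hence $|V| = n|G|$. So I expect the smallest prime subset to have exactly $n|G|$ elements. A natural candidate is to fix one index, say the first coordinate $i = 1$, and take $V = \{(1, a, j) : a \in G, \, j \in [n]\}$, which indeed has $n|G|$ elements. By Proposition~\ref{gm-lsgp} this is equivalent to showing that $B(G,n) \setminus V$, the set of all triples whose first coordinate is not $1$ together with $0$, is a largest proper subsemigroup; but I prefer to argue primeness directly.

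First I would verify that this $V$ is prime. Suppose $(k,b,l)(p,c,q) = (k, bc, q) \in V$ (the only way a product lands in $V$, since products equal to $0 \notin V$ are irrelevant); then $k = 1$, so the left factor $(k,b,l) = (1,b,l)$ lies in $V$. This is immediate and shows primeness. Next I would show no smaller prime subset exists, i.e. every prime subset has at least $n|G|$ elements. This is the crux. The key observation is that for a fixed index $i$, the elements $\{(i,a,j) : a \in G,\, j \in [n]\}$ are forced to behave coherently: if $(i,a,j) \in V$ for some $a,j$, I can use the factorization $(i,a,j) = (i, a g^{-1}, m)(m, g, j)$ for any $m \in [n]$ and $g \in G$ to propagate membership. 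Primeness says one of the two factors lies in $V$, and by varying $m$ and $g$ I aim to conclude that $V$ must contain an entire "row" $\{(i, \cdot, \cdot)\}$ of size $n|G|$, or an entire "column," giving the lower bound $|V| \ge n|G|$.

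The main obstacle will be making the lower-bound propagation argument airtight. Primeness only guarantees that \emph{at least one} of the two factors lies in $V$, so from a single element of $V$ I cannot immediately force a specific other element in; I must track which factorizations are available and argue that the set of "missing" elements cannot be too large without violating primeness for some product. Concretely, I would consider the complement $S = B(G,n) \setminus V$ and use that $S$ is a subsemigroup (by Proposition~\ref{gm-lsgp}): I would analyze which subsets of $[n]\times G\times[n]$, closed under the partial multiplication, can be large, showing that any proper subsemigroup omits at least one full row or column worth of elements. A clean way to organize this is via the indices: if $S$ contains two elements $(i,a,j)$ and $(j,b,k)$ then it contains their product $(i,ab,k)$, so the set of first-coordinate/second-coordinate pairs appearing in $S$ is constrained, and counting these constraints should yield $|S| \le (n^2 - n + 1)|G| + 1$. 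I would finish by confirming the candidate $V$ attains the bound, so it is smallest, and then invoke Corollary~\ref{c.tech} to read off $r_5(B(G,n)) = (n^2 - n + 1)|G| + 2$.
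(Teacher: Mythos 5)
There is a genuine gap, and it originates in an arithmetic slip at the very start: since $|B(G,n)| = n^2|G| + 1$, matching $r_5 = (n^2-n+1)|G|+2$ via Corollary~\ref{c.tech} forces $|V| = n^2|G| + 1 - \bigl((n^2-n+1)|G|+1\bigr) = (n-1)|G|$, not $n|G|$ as you computed. This error dooms both halves of your plan. Your candidate $V = \{(1,a,j) : a \in G,\, j \in [n]\}$ is indeed prime (your verification is fine), but it is \emph{not} smallest: the paper's candidate is a row with one column deleted, $V = \{(n,a,k) : a \in G,\, 1 \le k \le n-1\}$, of size $(n-1)|G|$. This smaller set is still prime because a product $(i,a,j)(k,b,l)$ landing in it forces $i = n$ and $l \le n-1$; if $k = n$ the right factor is in $V$, and if $k \ne n$ then $j = k \le n-1$ so the left factor is in $V$ --- the excluded element $(n,a,n)$ is never needed. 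Consequently your proposed lower bound ``every prime subset has at least $n|G|$ elements'' is provably false, as is your structural claim that every proper subsemigroup of $B(G,n)$ must omit a full row's or column's worth of elements: the complement of the paper's $V$, namely $\{(i,a,j) : i \ne n\} \cup \{(n,a,n) : a \in G\} \cup \{0\}$, is a proper subsemigroup omitting only $(n-1)|G|$ elements. Had you carried your plan through, you would have derived $r_5 = (n^2-n)|G|+2$, contradicting the theorem.

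Beyond fixing the target cardinality, the real crux --- which your sketch only gestures at --- is the lower bound $|U| \ge (n-1)|G|$ for an arbitrary prime subset $U$. Your propagation idea (factor $(i,a,j) = (i,ag^{-1},m)(m,g,j)$ and vary $m$, $g$) is the right raw material, but it must be organized into a case analysis as in the paper: if $U \subsetneq V$ one exhibits a factorization of an element of $U$ both of whose factors avoid $U$ (contradiction); otherwise $U$ contains $0$, some $(n,a,n)$, or some $(p,a,q)$ with $p \le n-1$, and in each case the many factorizations of that element ($0 = (i,b,1)(2,b,i)$, or $(p,a,q) = (p,b,i)(i,b^{-1}a,q)$ over all $i$ and $b$) force at least $(n-1)|G|$ distinct elements into $U$, with a separate count needed when $i \in \{p,q\}$. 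Note in particular that primeness guarantees only ``one factor per factorization'' lies in $U$, so the count requires checking that the factors arising from distinct $(i,b)$ are pairwise distinct --- a detail your sketch correctly flags as the obstacle but does not resolve, and which cannot be resolved toward your stated bound of $n|G|$ since that bound is false.
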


\begin{proof}
We claim that the set $V = \{(n, a, k) \mid a \in G \; \text{and}\;1 \le k \le n - 1\}$ is a smallest prime subset of $B(G, n)$. Since $|V| = (n-1)|G|$, the result follows.

\emph{$V$ is a prime subset}: For $(i, a, j), (k, b, l) \in B(G, n)$, if $(i, a, j)(k, b, l) \in V$, then $i = n$, $j = k$ and $1 \le l \le n-1$. If $k = n$, then clearly $(k, b, l) \in V$; otherwise, $(i, a, j) \in V$.

\emph{$V$ is a smallest prime subset}: Let $U$ be a prime subset of $B(G, n)$ such that $|U| < |V|$. If $U \subset V$, then let $(n, a, q) \in V \setminus U$. Now, for $(n, b, p) \in U$ and for all $i \in [n]$, clearly we have \[(n, b, p) = (n, a, i)(i, a^{-1}b, p).\] Note that, for $i = q$, neither $(n, a, i)$ nor $(i, a^{-1}b, p)$ is in $U$; a contradiction to $U$ is a prime set.

Otherwise, we have $U \not \subset V$. Let $\theta \in U \setminus V$; then, $\theta$ can be (i) $0$, (ii) $(n, a, n)$, or (iii) $(p, a, q)$, for some $p \in [n-1]$, $q \in [n]$ and $a \in G$. In all the three cases we observe that $|U| \ge (n-1)|G|$, which is a contradiction to the choice of $U$.

(i) $\theta = 0$: For each $b \in G$ and $i \in [n]$, since $(i, b, 1)(2, b, i) = 0$, there are at least $n|G|$ elements in $U$.

(ii) $\theta = (n, a, n)$, for some $a \in G$: For each $b \in G$ and $i \in [n-1]$, since $(n, b, i)(i, b^{-1}a, n) = (n, a, n)$, there are at least $(n-1)|G|$ elements in $U$.

(iii) $\theta = (p, a, q)$, for some $p \in [n-1]$, $q \in [n]$ and $a \in G$. First note that, for each $b \in G$ and $i \in [n]$, we have $(p, b, i)(i, b^{-1}a, q) = (p, a, q)$. If $p = q$, the argument is similar to above (ii). Otherwise, corresponding to $n-2$ different choices of $i \ne p, q$, there are at least $(n - 2)|G|$ elements in $U$. In addition, for $i = p$ or $q$, $U$ must contain at least $|G|$ more elements. Thus, $|U| \ge (n-1)|G|$.
\end{proof}

\begin{corollary}
For $n \ge 2$, $r_5(B_n) = n^2-n +3.$
\end{corollary}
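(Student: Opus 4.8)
The plan is to obtain this corollary as an immediate specialization of Theorem \ref{lr-bs}. By the construction preceding that theorem, $B_n$ is precisely $B(G,n)$ in the case where $G$ is the trivial group; in particular $|G| = 1$. The hypothesis $n \ge 2$ is identical in both statements, so Theorem \ref{lr-bs} applies verbatim, and I would simply substitute $|G| = 1$ into its conclusion. Concretely, substituting into $r_5(B(G,n)) = (n^2 - n + 1)|G| + 2$ gives $r_5(B_n) = (n^2 - n + 1) + 2 = n^2 - n + 3$, as required.

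Since the statement is a direct specialization, there is essentially no obstacle: the entire content of the result already resides in Theorem \ref{lr-bs}, and the only thing to verify is the trivial bookkeeping that the one-element group contributes $|G| = 1$. If instead one wanted a self-contained argument that did not cite the general theorem, the approach would be to re-run the prime-subset computation directly inside $B_n$ via Corollary \ref{c.tech}: the candidate smallest prime subset specializing the one used above is $\{(n, e, k) : 1 \le k \le n-1\}$, where $e$ denotes the unique element of the trivial group, and it has $n-1$ elements. Since $|B_n| = n^2 + 1$, Corollary \ref{c.tech} would then give $r_5(B_n) = (n^2 + 1 - (n-1)) + 1 = n^2 - n + 3$. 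This merely repeats, with $|G| = 1$, the work already carried out for the general Brandt semigroup, so the cleanest route is the one-line specialization above.
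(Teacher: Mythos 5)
Your proposal is correct and matches the paper exactly: the corollary is stated there without proof precisely because it is the immediate specialization of Theorem \ref{lr-bs} to the trivial group, i.e.\ substituting $|G| = 1$ into $(n^2 - n + 1)|G| + 2$. Your optional self-contained variant via Corollary \ref{c.tech} is also sound (with $|B_n| = n^2 + 1$ and the prime subset of size $n-1$), but as you note it merely re-runs the general argument.
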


\section{Order-preserving singular selfmaps}

In this section, we consider the semigroup \[\mathcal{O}_n = \{\alpha \in {\rm Sing}_n : ( \forall x, y \in [n]) \; x \leq y \Rightarrow x \alpha \leq y \alpha\}\] of order-preserving singular selfmaps with respect to composition of mappings, where ${\rm Sing}_n$ is the set of all singular maps (non-permutations) from $[n]$ to $[n]$. In \cite{a.hw92}, Gomes and Howie considered the semigroup $\mathcal{O}_n$ and found that $r_2(\mathcal{O}_n) = n$, for $n \ge 2$. Note that $r_1(\mathcal{O}_2) = 2$. For $n \ge 3$, since $\mathcal{O}_n$ is not a band, it can be ascertained that $r_1(\mathcal{O}_n) = 1$ (cf.  \cite[Theorem 2]{a.hw00}). However, the ranks $r_3, r_4$ and $r_5$ of the semigroup $\mathcal{O}_n$ are not known. In this work, we obtain the large rank of the semigroup $\mathcal{O}_n$. It can be easily observed that $r_5(\mathcal{O}_2) = 2$. In what follows, $n \ge 3$.

We recall the Green's $\mathcal{J}$-classes in $\mathcal{O}_n$ from \cite{a.hw92}. Since
$\alpha \; \mathcal{J} \; \beta$ if and only if $|{\rm im } \; \alpha| = |{\rm im} \; \beta|$, $\mathcal{O}_n$ is the union of $\mathcal{J}$-classes $J_1, J_2, \ldots, J_{n-1}$, where (for $1 \le r < n$) \[J_r = \{\alpha \in \mathcal{O}_n : |{\rm im \;\alpha}| = r\}.\]
Note that, for $\alpha \in J_{n-1}$, the $\ker \alpha$ has only one non-singleton class with the possibilities $\{1, 2\}, \{2, 3\}, \ldots, \{n - 1, n\}$. Further, every element of $J_{n-1}$ can be uniquely determined by its kernel and image. Hence, for $i \in [n - 1]$ and $k \in [n]$, we use $\zeta_{(i, k)}$ to denote the element of $J_{n-1}$ whose kernel has the non-singleton class $\{i, i + 1\}$ and its image is $[n]\setminus \{k\}$.

\begin{lemma}\label{l.pro-eta}
For $p, r \in [n - 1]$ and $q, s \in [n]$, the product $\zeta_{(p, q)}\zeta_{(r, s)} \in J_{n - 1}$ if and only if $q \in \{r, r + 1 \}$. In this case, $\zeta_{(p, q)}\zeta_{(r, s)} =  \zeta_{(p, s)}$.
\end{lemma}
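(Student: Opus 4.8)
The plan is to work directly with $\zeta_{(r,s)}$ as a concrete order-preserving map and to analyse the composite $\zeta_{(p,q)}\zeta_{(r,s)}$ (applied left to right) through its image and kernel. First I would record the shape of $\zeta_{(r,s)}$: it is injective on $[n]$ except that $r$ and $r+1$ share a common image point, and its image is $[n]\setminus\{s\}$. Since the product in $\mathcal{O}_n$ is composition of maps, the image of $\zeta_{(p,q)}\zeta_{(r,s)}$ equals $([n]\setminus\{q\})\zeta_{(r,s)}$, the image of $\zeta_{(r,s)}$ restricted to the image $[n]\setminus\{q\}$ of the first factor.

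Next, for the membership criterion, I would observe that a composite of two maps in $J_{n-1}$ lies in $J_{n-1}$ precisely when it is as injective as possible, i.e. when its image again has size $n-1$. The only collapsing that $\zeta_{(r,s)}$ performs is to identify $r$ with $r+1$, so $\zeta_{(r,s)}$ is injective on $[n]\setminus\{q\}$ if and only if $[n]\setminus\{q\}$ does not contain both $r$ and $r+1$; this is equivalent to $q\in\{r,r+1\}$. Thus $|{\rm im}(\zeta_{(p,q)}\zeta_{(r,s)})| = n-1$, equivalently the product lies in $J_{n-1}$, exactly when $q\in\{r,r+1\}$, which yields both directions of the first assertion.

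Finally, assuming $q\in\{r,r+1\}$, I would identify the product with $\zeta_{(p,s)}$ using the uniqueness of elements of $J_{n-1}$ by kernel and image. For the image: removing one of the two points $r,r+1$ from the domain of $\zeta_{(r,s)}$ does not delete their common image value, so the composite still attains every element of $[n]\setminus\{s\}$, giving image $[n]\setminus\{s\}$. For the kernel: $p$ and $p+1$ are already identified by $\zeta_{(p,q)}$ and hence by the composite; as the composite has image of size $n-1$ on a domain of size $n$, it has a single non-singleton kernel class of size $2$, which must therefore be exactly $\{p,p+1\}$. Matching kernel and image then forces the product to be $\zeta_{(p,s)}$. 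I expect the only delicate point to be the bookkeeping in this last step, specifically confirming that the common value of $r,r+1$ is not lost from the image and that no second kernel class is created, so that the uniqueness characterisation of $J_{n-1}$ can be invoked cleanly.
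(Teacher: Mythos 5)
Your proof is correct and takes essentially the same route as the paper: both decide membership in $J_{n-1}$ by computing the size of the image of the composite, which comes down to how ${\rm im}\;\zeta_{(p, q)} = [n]\setminus\{q\}$ meets the kernel classes of $\zeta_{(r, s)}$ (your formulation via injectivity of $\zeta_{(r, s)}$ on $[n]\setminus\{q\}$ is the same observation). You additionally spell out the converse direction and the kernel--image bookkeeping identifying the product with $\zeta_{(p, s)}$, details the paper compresses into ``hence, the result follows.''
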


\begin{proof}
If $q\in \{r, r + 1\}$, then the image of $\zeta_{(p, q)}$ contains precisely one element
from every kernel class of $\zeta_{(r, s)}$ so that $|{\rm im}\;(\zeta_{(p, q)}\zeta_{(r, s)})| = n - 1$. Hence, the result follows.
\end{proof}

\begin{lemma}\label{l.spmset-on}
Every prime subset of $\mathcal{O}_n$ contains at least $n-1$ elements.
\end{lemma}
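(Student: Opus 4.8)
The plan is to reduce the statement to a counting argument inside the top $\mathcal{J}$-class $J_{n-1}$. First I would record the consequence of primeness isolated in the proof of Proposition~\ref{gm-lsgp}: if $U$ is a (necessarily nonempty) prime subset, then $S := \mathcal{O}_n \setminus U$ is closed under multiplication, i.e.\ a subsemigroup whenever it is nonempty. Since $\mathcal{O}_n$ is generated by its rank-$(n-1)$ maps, that is $\langle J_{n-1} \rangle = \mathcal{O}_n$ (cf.\ \cite{a.hw92}), the set $S$ cannot contain all of $J_{n-1}$ without coinciding with $\mathcal{O}_n$, which would force $U = \varnothing$. Hence every prime subset must meet $J_{n-1}$, and I may fix an element $\theta = \zeta_{(p,s)} \in U$ with $p \in [n-1]$ and $s \in [n]$.

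Next I would exploit Lemma~\ref{l.pro-eta} to write $\theta$ as a product in many ways and feed each factorization into the defining implication of a prime set. For every $r \in [n-1]$ both $\zeta_{(p,r)}\zeta_{(r,s)} = \theta$ and $\zeta_{(p,r+1)}\zeta_{(r,s)} = \theta$ hold, since the condition $q \in \{r, r+1\}$ of Lemma~\ref{l.pro-eta} is met with $q = r$ and with $q = r+1$ respectively. Writing $A = \{r \in [n-1] : \zeta_{(r,s)} \in U\}$ for the ``columns'' with image-gap $s$ lying in $U$, and $M = \{m \in [n] : \zeta_{(p,m)} \in U\}$ for the ``row'' of kernel-type $p$ lying in $U$, primeness applied to the two families yields, for each $r \in [n-1]$, that $r \in A \cup M$ and that $r \in A$ or $r+1 \in M$. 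Equivalently, $[n-1] \subseteq A \cup M$ and $\bar{A} + 1 \subseteq M$, where $\bar{A} = [n-1] \setminus A$.

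Finally I would count. The row elements $\{\zeta_{(p,m)} : m \in M\}$ and the column elements $\{\zeta_{(r,s)} : r \in A\}$ all lie in $U$, are $|M|$ and $|A|$ in number, and share exactly the single element $\theta = \zeta_{(p,s)}$, because two such expressions coincide only when both kernel-types and both image-gaps agree. Hence $|U| \ge |A| + |M| - 1$. If $\bar{A} = \varnothing$ then $|A| = n-1$ and the column elements alone already give the bound; otherwise $\bar{A} \cup (\bar{A}+1) \subseteq M$ contains $\bar{A}$ together with $\max(\bar{A}) + 1 \notin \bar{A}$, so $|M| \ge |\bar{A}| + 1 = n - |A|$, whence $|U| \ge |A| + |M| - 1 \ge n - 1$.

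I expect the counting to be the delicate part. A single family of factorizations (say only the choice $q = r$) forces one witness per index $r$, but the witnesses attached to $r = p$ and $r = s$ can both turn out to be $\theta$ itself, so that family alone yields only $|U| \ge n - 2$; this off-by-one is precisely what the second family with $q = r+1$ is designed to repair, through the index-shift inclusion $\bar{A} + 1 \subseteq M$. Getting the overlap bookkeeping right, so that $\theta$ is the unique element double-counted between the row and the column, is the only subtle point; the presence of an element of $J_{n-1}$ in $U$ and the factorization identities themselves are immediate from Lemma~\ref{l.pro-eta}.
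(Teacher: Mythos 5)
Your proof is correct, and it follows the paper's overall strategy---locate an element $\theta = \zeta_{(p,s)}$ of $U$ in the top class $J_{n-1}$ and then exploit the factorizations supplied by Lemma \ref{l.pro-eta}---but it executes both steps differently. For the first step, the paper applies Garba's lemma directly: every $\alpha \in J_r$ (for $r \le n-2$) is a product of two elements of $J_{r+1}$, so primeness pushes a member of $U$ up the $\mathcal{J}$-classes one level at a time; your complement-is-a-subsemigroup argument (via Proposition \ref{gm-lsgp}) combined with $\langle J_{n-1} \rangle = \mathcal{O}_n$ is an equivalent repackaging of the same fact. The genuine divergence is in the counting. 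The paper uses only \emph{one} factorization per index $j$, chosen adaptively: $\zeta_{(p,j)}\zeta_{(j,q)}$ when $q > j$ and $\zeta_{(p,j+1)}\zeta_{(j,q)}$ when $q \le j$. This choice guarantees that the first factor never has image-gap $q$, so the resulting $n-1$ two-element witness pairs are pairwise disjoint, and one witness per pair yields $|U| \ge n-1$ with no overlap bookkeeping at all (the pair for $j = p$ contains $\theta$ itself, harmlessly). You instead keep \emph{both} factorizations for every $r$, extract the constraints $[n-1] \subseteq A \cup M$ and $\bar{A} + 1 \subseteq M$, and repair the single double-count at $\theta$ via $|U| \ge |A| + |M| - 1$ together with the shift element $\max(\bar{A}) + 1 \notin \bar{A}$; your own diagnosis of the off-by-one arising from a single family is precisely the defect that the paper's adaptive choice of first factor is engineered to avoid. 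Both arguments are sound: the paper's selection trick makes the count immediate, while your two-family version is more systematic and makes visible exactly where the $(n-1)$-st element comes from.
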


\begin{proof}
Let $U$ be a prime subset of $\mathcal{O}_n$. By \cite[Lemma 2.1]{a.garba94}, for each $\alpha \in J_r \; (r \leq n-2)$ there exist $\beta, \delta \in J_{r + 1}$ such that $\alpha = \beta \delta$. Hence, the prime subset $U$ contains an element $\zeta_{(p, q)}$ of $J_{n - 1}$, for some $p \in [n-1]$ and $q \in [n]$. In view of Lemma \ref{l.pro-eta}, for each $j \in [n -1]$, let us consider the following $n-1$ decompositions of $\zeta_{(p, q)}$.

\[\zeta_{(p, q)} =
                \left\{\begin{array}{ll}
                \zeta_{(p, j)}\zeta_{(j, q)} & \text {if $q > j $;}  \\
                \zeta_{(p, j + 1)}\zeta_{(j, q)}    & \text {if $q \le j$.}
                \end{array}\right. \]
Note that all the $2(n - 1)$ elements given in the above $n-1$ decompositions are different from each other. Since $U$ is a prime subset, at least one element from each decomposition should be in $U$. Hence, $|U| \ge n - 1$.
\end{proof}

\begin{theorem}
$r_5(\mathcal{O}_n) = {2n - 1\choose n - 1} - n + 1$.
\end{theorem}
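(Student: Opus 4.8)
The plan is to apply Corollary \ref{c.tech}, which reduces the computation of $r_5(\mathcal{O}_n)$ to determining the size of a smallest proper prime subset $V$ of $\mathcal{O}_n$, via $r_5(\mathcal{O}_n) = |\mathcal{O}_n \setminus V| + 1$. Lemma \ref{l.spmset-on} already supplies the lower bound $|V| \ge n-1$ on any prime subset, so the two remaining tasks are: (a) to exhibit a concrete prime subset of size exactly $n-1$, certifying that this minimum is actually achieved, and (b) to count $|\mathcal{O}_n|$.

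First I would record the cardinality of $\mathcal{O}_n$. The number of order-preserving maps $[n] \to [n]$ equals the number of weakly increasing sequences of length $n$ over $[n]$, namely $\binom{2n-1}{n-1}$; the unique order-preserving permutation is the identity, so discarding it gives $|\mathcal{O}_n| = \binom{2n-1}{n-1} - 1$.

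Next I would construct the extremal prime subset. Set $V = \{\zeta_{(i, n)} : i \in [n-1]\}$, the $n-1$ elements of $J_{n-1}$ whose image is $[n-1]$. To verify primality, I would first observe that if a product $\alpha\beta$ lies in $J_{n-1}$, then, since $|{\rm im}\,(\alpha\beta)| \le \min\{|{\rm im}\,\alpha|, |{\rm im}\,\beta|\}$ and $n-1$ is the maximal image size occurring in $\mathcal{O}_n$, both factors must already lie in $J_{n-1}$. Thus the only products that can fall into $V$ are of the form $\zeta_{(p, q)}\zeta_{(r, s)}$. By Lemma \ref{l.pro-eta}, such a product lies in $J_{n-1}$ only when $q \in \{r, r+1\}$, and in that case equals $\zeta_{(p, s)}$. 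Now if $\zeta_{(p, s)} \in V$ then $s = n$, whence the right factor $\zeta_{(r, s)} = \zeta_{(r, n)}$ already belongs to $V$. Hence the prime condition holds, so $V$ is a prime subset, and combined with the lower bound from Lemma \ref{l.spmset-on} it is a smallest proper prime subset of $\mathcal{O}_n$.

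Finally, Corollary \ref{c.tech} yields $r_5(\mathcal{O}_n) = |\mathcal{O}_n| - |V| + 1 = \left(\binom{2n-1}{n-1} - 1\right) - (n-1) + 1 = \binom{2n-1}{n-1} - n + 1$. The main conceptual step is the construction in (a): the decompositions used in the proof of Lemma \ref{l.spmset-on} already hint that one should control either the kernel index $p$ or the image gap $s$ of the product, and fixing the image gap at $n$ (so that the right factor is always forced into $V$) is precisely what makes the prime condition hold with only $n-1$ elements. The remaining verification that no other products can land in $V$ rests entirely on the image-size inequality, and the final equality is routine counting.
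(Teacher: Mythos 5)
Your proof is correct and follows essentially the same route as the paper's: the paper takes $V = \{\zeta_{(i,q)} : i \in [n-1]\}$ for an arbitrary fixed $q \in [n]$ (your choice $q = n$ is just one instance), verifies primality via Lemma \ref{l.pro-eta}, and concludes with Lemma \ref{l.spmset-on} and Corollary \ref{c.tech} exactly as you do. Your only addition is to spell out, via the inequality $|{\rm im}\,(\alpha\beta)| \le \min\{|{\rm im}\,\alpha|, |{\rm im}\,\beta|\}$, why both factors of a product landing in $V$ must lie in $J_{n-1}$ --- a step the paper leaves implicit.
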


\begin{proof}
It is known that $|\mathcal{O}_n| = {2n - 1\choose n - 1} - 1$ (cf. \cite{a.hw71}).  For $q \in [n]$, we observe that the set \[V = \{\zeta_{(i, q)} : i \in [n - 1]\}\] is a prime subset of $\mathcal{O}_n$. Hence, the result follows from Lemma \ref{l.spmset-on} and Corollary \ref{c.tech}.  For $\beta, \delta \in \mathcal{O}_n$, if $\beta \delta \in V$, then $\beta\delta = \zeta_{(k, q)}$ for some $k \in [n-1]$. Then, by Lemma \ref{l.pro-eta}, $\delta = \zeta_{(l, q)}$ for some $l \in [n - 1]$ so that $\delta \in V$.
\end{proof}

\section{Conclusion}

This work introduces the notion of prime subsets in semigroups and provides an alternative technique for finding the large rank of finite semigroups. This technique gives a shorter proof than the proof given by Howie and Ribeiro for the large rank of Brandt semigroups using a graph theoretic approach. Though their approach reveals nice connection between digraphs and Brandt semigroups (cf. \cite{a.hw99,a.hw00}),  the approach has its own limitations with respect to semigroups of transformations. However, in such cases, one may adopt the technique introduced in this work. For instance, this work obtains the large rank of the semigroup of order-preserving singular selfmaps. Further, the present authors found the large ranks of semigroup reducts of affine near-semirings over Brandt semigroups \cite{a.jk-2,a.jk-4}.

\section*{Acknowledgements}

We would like to express our sincere gratitude to the reviewer for his/her encouragement to prepare Section 4 and suggestions which helped us in  improving the presentation of the paper.


\begin{thebibliography}{11}

\bibitem{a.hw99}
J.~M. Howie and M.~I.~M. Ribeiro.
\newblock Rank properties in finite semigroups.
\newblock {\em Comm. Algebra}, 27(11):5333--5347, 1999.

\bibitem{a.hw00}
J.~M. Howie and M.~I.~M. Ribeiro.
\newblock Rank properties in finite semigroups {II}: {T}he small rank and the
  large rank.
\newblock {\em Southeast Asian Bull. Math.}, 24(2):231--237, 2000.

\bibitem{a.jk-2}
Jitender Kumar and K.~V. Krishna.
\newblock The ranks of additive semigroup reduct of affine near-semiring
  over {B}randt semigroups.
\newblock {\em Submitted}. arXiv:1308.4087

\bibitem{a.jk-4}
Jitender Kumar and K.~V. Krishna.
\newblock Rank properties of multiplicative semigroup reducts of affine near-semirings over $B_n$.
\newblock  arXiv:1311.0789

\bibitem{a.hw71}
J.~M. Howie.
\newblock Products of idempotents in certain semigroups of transformations.
\newblock {\em Proc. Edinburgh Math. Soc.} (2), 17:223--236, 1971.

\bibitem{a.garba94}
G. U. Garba.
\newblock On the idempotent ranks of certain semigroups of order-preserving transformations.
\newblock {\em Portugal. Math.}, 51(2):185--204, 1994.

\bibitem{a.hw92}
G. M. S. Gomes and J.~M. Howie.
\newblock On the ranks of certain semigroups of order-preserving transformations.
\newblock {\em Semigroup Forum}, 45(3):272--282, 1992.

\bibitem{a.mini09}
M. Minisker.
\newblock Rank properties of certain semigroups.
\newblock {\em Semigroup Forum}, 78(1):99--105, 2009.

\bibitem{t.jdm02}
J. D. Mitchell.
\newblock {\em Extremal problems in combinatorial semigroup theory}.
\newblock Ph.D. thesis, University of St Andrews, 2002.
\end{thebibliography}
\end{document}